    \newcommand{\href}[2]{#2}
\newtheorem{theorem}{Theorem}[section]
\newtheorem{lemma}[theorem]{Lemma}
\newtheorem{assumption}[theorem]{Assumption}
\newtheorem{remark}[theorem]{Remark}
\numberwithin{equation}{section}  %amsmath command: tie counter to section
\renewcommand{\P}{{\mathbb P}}
\newcommand{\Q}{{\mathbb Q}}
\newcommand{\R}{{\mathbb R}}
\newcommand{\bs}{\boldsymbol}
\newcommand{\cT}{{\mathcal T}}
\def\l{{\langle}}
\def\r{{\rangle}}
\renewcommand\lll{|\kern-1pt|\kern-1pt|} %usata
\newcommand{\inn}{\text{ in }}
\newcommand{\ale}{\text{a.e.\,}} %well, that just works nice (ae taken)
\def\tbar{|\hspace*{-0.15em}|\hspace*{-0.15em}|}
\begin{document}

%\title[Stablizer-Free WG for Monotone PDEs]
%      {Stablizer-Free Weak Galerkin Methods for Monotone Quasilinear Elliptic PDEs}
\title[Stabilizer-Free WG for Monotone PDEs]{Stabilizer-Free Weak Galerkin Methods for Monotone Quasilinear Elliptic PDEs}

\author[X. Ye]{Xiu Ye}
\email{xxye@ualr.edu}

\author[S. Zhang]{Shangyou Zhang}
\email{szhang@udel.edu}

\author[Y. Zhu]{Yunrong Zhu}
\email{zhuyunr@isu.edu}

\address{Department of Mathematics\\
         University of Arkansas at Little Rock\\
         Little Rock, AR 72204.}
\thanks{This research was supported in part by National Science Foundation Grant DMS-1620016}

\address{Departmetn of Mathematical Sciences\\
	 University of Delaware\\
	  Newark, DE 19716.}

\address{Department of Mathematics and Statistics\\
         Idaho State University\\
         Pocatello, ID 83209.}
\thanks{YZ was supported in part by NSF DMS 1319110.}

\date{\today}

\keywords{
Uniqueness,
monotone PDEs,
quasilinear equations,
weak Galerkin methods,
stabilizer-free
}

% (usual: 65N22,65N12,35J62)
%AMS subject classifications: 65N30,35J65
\subjclass[2000]{65N30, 35J62}

\begin{abstract}
	In this paper, we study the stabilizer-free weak Galerkin methods on polytopal meshes for a class of second order elliptic boundary value problems of divergence form and with gradient nonlinearity in the principal coefficient. With certain assumptions on the nonlinear coefficient, we show that the discrete problem has a unique solution. This is achieved by showing that the associated operator satisfies certain continuity and monotonicity properties. With the help of these properties, we derive  optimal error estimates in the energy norm. We present several numerical examples to verify the error estimates.
\end{abstract}

\maketitle

% \clearpage

%\vspace*{-0.5cm}

%{\footnotesize
%\tableofcontents
%}
%\vspace*{-0.5cm}

%%%%%%%%%%%%%%%%%%%%%%%%%%%%%%%%%%%%%%%%%%%%%%%%%%%%%%%%%%%%%%%%%%%%%%%%%%%%%%
%%%%%%%%%%%%%%%%%%%%%%%%%%%%%%%%%%%%%%%%%%%%%%%%%%%%%%%%%%%%%%%%%%%%%%%%%%%%%%
\section{Introduction}\label{sec:intro}
We consider the stabilizer free weak Galerkin method for  the quasilinear elliptic partial
differential equations (PDEs) in $\Omega\subset \R^d$ with $d=2,$ or $3$:
\begin{equation}\label{eqn:model}
\left\{\begin{array}{rll}
-\nabla \cdot (\kappa(x,|\nabla u|) \nabla u) &= f &~\inn~ \Omega,\\
u &=0 &~\text{ on } \partial \Omega.
\end{array}\right.
\end{equation}
This class of PDEs arise, for example, in the study of compressible flow in the airfoil design (cf. \cite{Glowinski.R1984}) or the eddy currents in a nonlinear ferromagnetic material (cf. \cite{Seidman1985}).  We assume throughout the paper that the diffusion coefficient $\kappa(x,s)$, for $x \in \Omega$ and $s \in \R$ satisfies the following assumption.
\begin{assumption}\label{A1:elliptic}
	Assume $\kappa(x,s)$ is a Carath\'eodory function, and assume that 
	there are constants $0 <\alpha < \beta$ with
	\begin{align}\label{eqn:A1:elliptic}
	\alpha (t-s) \le \kappa(x, t) t - \kappa(x, s)s \le \beta(t-s), \quad 0\le s\le t
	\end{align}
	for $\ale x \in \Omega$.
\end{assumption}
\noindent Note that this assumption implies that $ \alpha \le \kappa(x,t) \le \beta$ $\ale x \in \Omega$ and for all $t>0$.
For simplicity, we denote $\kappa(s):=\kappa(x, s)$ in the rest of paper.

%%%%%%%%%%%%%%%%%%%%%%%%%%%%%%%%%%%%%%%%%%%%%%%%%%%%%%%%%%%%%%%%%%%%%%%%%%%%%%
The analysis of standard conforming finite element method for general  quasilinear problems was discussed in \cite{Feistauer.M;Zenisek.A1987}.  For the class of monotone quasilinear PDEs \eqref{eqn:model}, the finite element error estimates were developed in \cite{Chow.S1989} for the differential operators defined on a reflexive Banach space. In \cite{HoustonRobsonSuli2005}, a one-parameter family of $hp$-version discontinuous Galerkin finite element methods was developed and analyzed for the numerical approximation of this type of quasilinear elliptic equations, subject to mixed Dirichlet-Neumann boundary conditions on $\partial \Omega$.  The finite volume methods were discussed in \cite{BiLin2012}.

First proposed in \cite{WangYe2013}, the weak Galerkin (WG) finite element methods are based on the novel idea of using \emph{weak functions} and their \emph{weak derivative} (see Section~\ref{sec:wg} for more details) in the design of numerical approximation schemes. Due to the discontinuous nature of the methods, the WG methods are very flexible in solving a variety of PDEs on general polytopal meshes. For the second order elliptic quasilinear PDEs, the existence of solutions of the WG methods was shown in \cite{MuWangYe2015} by a Schauder fixed point argument. However, the uniqueness and the error estimations of the numerical approximations are restricted only to the linear PDEs, and have not been addressed for the nonlinear ones.  Only recently in \cite{SunHuangWang2018}, the authors gave the well-posedness and error estimate in the energy norm for the monotone quasilinear PDEs \eqref{eqn:model}.

One disadvantage of the aforementioned WG methods, as well as other classes of discontinuous finite element methods (e.g. discontinuous Galerkin methods), is the existence of stabilization terms, which are usually necessary to enforce weak continuity of the discontinuous solutions across element boundaries. Removing stabilizers from discontinuous finite element methods will simplify formulations and reduce programming complexity significantly. Motivated by this, a class of stabilizer-free WG methods were first proposed and analyzed recently in \cite{YeZhang2019a} for Poisson's equation. In this new formulation, the WG methods can be viewed as the counterpart of the weak formulation of the continuous problem by replacing the classical gradient by the weak gradient operator. 

The goal of this paper is to formulate and analyze this stabilizer-free WG methods for the monotone quasilinear PDEs \eqref{eqn:model}. With the structural assumption~\ref{A1:elliptic}, we show the stabilizer-free WG formulation satisfies certain continuity and monotonicity properties. These properties imply the discrete problem has a unique solution, thanks to a nonlinear version of the Lax-Milgram theorem (Theorem~\ref{thm:fixed}) for monotone operators. We then derive optimal error estimates in the energy norm.

The rest of this paper is organized as follows. In Section~\ref{sec:wg}, we introduce basic notation, and present the stabilizer-free WG methods for the model problem \eqref{eqn:model}. In Section~\ref{sec:wellposedness}, we discuss the existence and uniqueness of the discrete problem. We first present an abstract existence and uniqueness of nonlinear operator equation, then verify the stabilizer-free WG methods satisfies the conditions based on the Assumption~\ref{A1:elliptic}. In Section~\ref{sec:error}, we show the main error estimate in the energy norm. In Section~\ref{sec:num}, we present some numerical experiments to confirm the theory. The paper ends with some concluding remarks and prospects for future work.

%%%%%%%%%%%%%%%%%%%%%%%%%%%%%%%%%%%%%%%%%%%%%%%%%%%%%%%%%%%%%%%%%%%%%%%%%%%%%%
\section{Weak Galerkin Method}
\label{sec:wg}
For any given subset $D\subseteq\Omega$, we use the standard
definition of Sobolev spaces $H^s(D)$ with $s\ge 0$. The associated inner product,
norm, and semi-norms in $H^s(D)$ are denoted by
$(\cdot,\cdot)_{s,D}$, $\|\cdot\|_{s,D}$, and $|\cdot|_{s,D}$, respectively. When $s=0$, $H^0(D)$ coincides with the space
of square integrable functions $L^2(D)$. In this case, the subscript
$s$ is suppressed from the notation of norm, semi-norm, and inner
products. Furthermore, the subscript $D$ is also suppressed when
$D=\Omega$. Throughout the paper, we use $C$ to denote a generic positive constant that is independent of the meshsize and the solutions, and may take different values in different appearance.

Let $\cT_h$ be a partition of the domain $\Omega$ consisting of
polygons in two dimensions or polyhedra in three dimensions satisfying a set of shape-regular conditions (see \cite{MuWangYe2015} for example).
For every element $T\in \cT_h$, we
denote by $h_T$ the diameter of $T$ and mesh size $h:=\max_{T\in\cT_h} h_T$
for $\cT_h$.

We introduce the \emph{weak function} $v = \{v_0, v_b\}$ that allows $v$ to take different forms in the interior and on the boundary of each element $T\in \cT_{h}$:
$$
v=
\left\{
\begin{array}{l}
\displaystyle
v_0,\quad {\rm in}\; T,
\\ [0.08in]
\displaystyle
v_b,\quad {\rm on}\;\partial T.
\end{array}
\right.
$$
Given an integer $k\ge 1$, we define a local finite element space $V_h(T)$ on each element $T\in \cT_{h}$ as follows
\begin{eqnarray}\label{Vh}
V_h(T)=\{v=\{v_0,v_b\}: v_0 \in \P_k(T), \; v_b|_e\in \P_{k}(e), e\in\partial T\}.
\end{eqnarray}
A global finite element space $V_h$ is then derived by patching all the local elements $V_h(T)$ with  common values on interior edges. Let $V_h^0$ be a subspace of $V_h$ consisting of functions with vanishing boundary.

For any $v=\{v_0,v_b\}$, the \emph{discrete weak gradient} $\nabla_{w} v\in [\P_{j}(T)]^d$ is defined as the unique vector field satisfying
\begin{equation}\label{eqn:dwg}
(\nabla_{w} v, \ \bs \tau)_T = -( v_0, \ \nabla\cdot \bs \tau)_T + \l v_b, \bs \tau\cdot \bs n\r_{\partial T},\quad
\forall \bs \tau\in [\P_{j}(T)]^d,
\end{equation}
where $j>k$ is an integer to be specified later (see Lemma~\ref{lm:equiv}).
For simplicity, we adopt the following notations,
%\begin{eqnarray*}
%	(v,w)_{\cT_h}&=&\sum_{T\in\cT_h}(v,w)_T=\sum_{T\in\cT_h}\int_T vw ,\\
%	\l v,w\r_{\partial\cT_h}&=&\sum_{T\in\cT_h} \l v,w\r_{\partial T}=\sum_{T\in\cT_h} \int_{\partial T} vw .
%\end{eqnarray*}

Then the WG scheme for \eqref{eqn:model} is to find $u_{h} = \{u_{0}, u_{b}\} \in V_{h}^{0}$ such that:
\begin{equation}\label{eqn:wg}
a_h(u_h;u_h, v):=\left(\kappa(|\nabla_w u_h|) \nabla_w u_h,\nabla_w v \right)_{\cT_h}=(f,\; v_0) \quad\forall v=\{v_0,v_b\}\in V_h^0.
\end{equation}

Let $Q_0$, $Q_b$ and $\Q_h$ be the locally defined $L^2$ projections onto $\P_{k}(T)$, $\P_{k}(e)$ and $[\P_{j}(T)]^d$ accordingly on each element $T\in\cT_h$ and $e\subset \partial T$.
For the exact solution $u$ of \eqref{eqn:model}, we define $Q_h u$ as
$$
Q_h u = \{Q_0 u,Q_bu\}\in V_h.
$$

For any $v\in V_h+H^1(\Omega)$, we introduce the following energy norm and the corresponding inner product: 
\begin{equation}\label{3barnorm}
\tbar v\tbar^2=(\nabla_w v,\nabla_w v)_{\cT_h}.
\end{equation}
We also define a discrete $H^1$ semi-norm as follows:
\begin{equation}\label{norm}
\|v\|_{1,h} = \left( \sum_{T\in\cT_h}\left(\|\nabla
v_0\|_T^2+h_T^{-1} \|  v_0-v_b\|^2_{\partial T}\right) \right)^{\frac{1}{2}}.
\end{equation}
It is easy to see that $\|v\|_{1,h}$ defines a norm in $V_h^0$. The following lemma indicates that $\|\cdot\|_{1,h}$ is equivalent
to the $\tbar\cdot\tbar$ in \eqref{3barnorm}.

\begin{lemma}[\cite{YeZhang2019a,YeZhang2020}]
	\label{lm:equiv} 
	Let $j=n+k-1$, where $n$ is the number of edges (faces) in each element. There exist two positive constants $C_1$ and $C_2$ such
	that for any $v=\{v_0,v_b\}\in V_h$, we have
	\begin{equation}\label{eqn:equiv}
	C_1 \|v\|_{1,h}\le \tbar v\tbar \leq C_2 \|v\|_{1,h}.
	\end{equation}
\end{lemma}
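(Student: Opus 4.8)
The plan is to reduce both inequalities in \eqref{eqn:equiv} to a single elementwise identity for the discrete weak gradient and then to choose test functions. Integrating by parts in the interior term of the definition \eqref{eqn:dwg}, for every $\bs\tau\in[\P_j(T)]^d$ one obtains
\begin{equation*}
(\nabla_w v,\bs\tau)_T=(\nabla v_0,\bs\tau)_T+\l v_b-v_0,\,\bs\tau\cdot\bs n\r_{\partial T}.
\end{equation*}
Because $\nabla_w v$ itself lies in $[\P_j(T)]^d$ and is therefore an admissible $\bs\tau$, this identity is the only structural ingredient I will use; everything then comes down to picking $\bs\tau$ on each $T$ and summing over $\cT_h$.

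For the upper bound $\tbar v\tbar\le C_2\|v\|_{1,h}$ I would set $\bs\tau=\nabla_w v$, apply Cauchy--Schwarz to both terms, and bound the boundary factor by the inverse trace inequality $\|\nabla_w v\cdot\bs n\|_{\partial T}\le Ch_T^{-1/2}\|\nabla_w v\|_T$, valid for polynomials on shape-regular elements. Cancelling one power of $\|\nabla_w v\|_T$ gives $\|\nabla_w v\|_T\le\|\nabla v_0\|_T+Ch_T^{-1/2}\|v_0-v_b\|_{\partial T}$ on each element; squaring and summing over $\cT_h$ yields the bound. This direction is routine.

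The lower bound $C_1\|v\|_{1,h}\le\tbar v\tbar$ is the substantive direction, and it is precisely here that the value $j=n+k-1$ is forced. Abbreviate $A:=\|\nabla v_0\|_T$ and $B:=h_T^{-1/2}\|v_0-v_b\|_{\partial T}$. With $b_T:=\prod_{i=1}^n\ell_i$ the interior bubble built from the $n$ linear functions vanishing on the edges (faces) of $T$, I would first take $\bs\tau_A:=b_T\,\nabla v_0\in[\P_{n+k-1}(T)]^d=[\P_j(T)]^d$; since $b_T$ vanishes on $\partial T$ the boundary term drops out, so the identity gives $(\nabla_w v,\bs\tau_A)_T=\int_T b_T|\nabla v_0|^2\ge cA^2$ while $\|\bs\tau_A\|_T\le CA$, whence $A\le C\|\nabla_w v\|_T$. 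Next I would construct $\bs\tau_B\in[\P_j(T)]^d$ whose normal trace equals $h_T^{-1}(v_b-v_0)$ on $\partial T$, assembled edge by edge from the degree-$k$ trace data times the degree-$(n-1)$ localizers $\prod_{l\ne i}\ell_l$ --- again landing in degree $n+k-1=j$ --- and scaled so that $\|\bs\tau_B\|_T\le CB$. The identity then reads $B^2=(\nabla_w v,\bs\tau_B)_T-(\nabla v_0,\bs\tau_B)_T\le C(\|\nabla_w v\|_T+A)B$, giving $B\le C\|\nabla_w v\|_T$ after inserting the first estimate. Adding the two bounds yields $A^2+B^2\le C\|\nabla_w v\|_T^2$ elementwise, and summation over $\cT_h$ delivers $\|v\|_{1,h}\le C\tbar v\tbar$.

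The main obstacle is the construction and norm control of $\bs\tau_B$: one must produce a polynomial vector field of degree exactly $j=n+k-1$ that realizes arbitrary $\P_k$ normal traces on all $n$ edges (faces) simultaneously --- the degree being the binding constraint, matched on the other side by the bubble $\bs\tau_A$ --- and then verify, by affine scaling to a reference configuration, that $\|\bs\tau_B\|_T\le Ch_T^{-1/2}\|v_0-v_b\|_{\partial T}$ with $C$ depending only on the shape-regularity of $\cT_h$. Establishing this edge-localization and the companion bubble coercivity $\int_T b_T|\nabla v_0|^2\ge c\,\|\nabla v_0\|_T^2$ uniformly over the varying polytopal shapes is the technical core; both reduce to equivalence of norms on finite-dimensional polynomial spaces transported to $T$ by scaling.
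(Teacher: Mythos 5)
This lemma is quoted in the paper from \cite{YeZhang2019a,YeZhang2020}; the paper itself contains no proof, so your attempt can only be compared with the argument in those references. Your outline reconstructs essentially that argument: the elementwise identity $(\nabla_w v,\bs\tau)_T=(\nabla v_0,\bs\tau)_T+\l v_b-v_0,\bs\tau\cdot\bs n\r_{\partial T}$, the choice $\bs\tau=\nabla_w v$ plus a polynomial trace/inverse inequality for the upper bound, and, for the lower bound, test fields built from the $n$ edge (face) linear functions --- the interior bubble times $\nabla v_0$ of degree $n+k-1$ and edge-localized fields of degree $(n-1)+k$ --- together with finite-dimensional norm equivalences made uniform by shape regularity. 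Your ordering is also the correct one: establishing $\|\nabla v_0\|_T\le C\|\nabla_w v\|_T$ first via the bubble (whose boundary term vanishes) and only then estimating the jump term avoids the circularity that would arise if one instead used $\bs\tau=\nabla v_0$ for the gradient part, since that choice leaves a boundary term of size $AB$ and the two elementwise estimates could not be decoupled.

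One detail as stated is wrong, though it is repairable. A field $\bs\tau_B$ assembled as (trace data) times $\prod_{l\ne i}\ell_l\,\bs n_i$ cannot have normal trace exactly $h_T^{-1}(v_b-v_0)$ on $e_i$: the localizer $\prod_{l\ne i}\ell_l$ vanishes at the endpoints of $e_i$ (the linear functions of the adjacent edges vanish at the shared vertices), so the normal trace of such a field necessarily vanishes there, while $v_b-v_0$ need not. Consequently the identity you write, $B^2=(\nabla_w v,\bs\tau_B)_T-(\nabla v_0,\bs\tau_B)_T$, does not hold for your construction. The standard fix --- and what the cited proof does --- is to accept the non-negative weight: on $e_i$ the normal trace of $\bs\tau_B$ is $h_T^{-1}(v_b-v_0)\,w_i$ with $w_i=\prod_{l\ne i}\ell_l\ge 0$, and equivalence of the weighted and unweighted $L^2$ norms on the finite-dimensional space $\P_k(e_i)$ (uniform after scaling, by shape regularity) gives $\l v_b-v_0,\bs\tau_B\cdot\bs n\r_{\partial T}\ge c\,B^2$ instead of $=B^2$. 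Every subsequent step of your chain survives with $B^2$ replaced by $cB^2$, so the conclusion stands; but as written the exact-trace claim is false, and the argument should invoke the weighted norm equivalence rather than exact trace matching.
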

We remark that even though in Lemma~\ref{lm:equiv} we required $j = n+k-1$, our numerical experiments in Section~\ref{sec:num} indicate that we can still get optimal error estimates with $j=k+1$ or $j=k+2$. 
\section{Existence and Uniqueness}
\label{sec:wellposedness}
In this section, we show the problem \eqref{eqn:wg} has a unique solution. For this purpose, we first present an abstract theorem. Let $H$ be a Hilbert space with the inner product denoted by $(\cdot, \cdot)_H$ and the induced norm $\|\cdot \|_H$.  We say a (nonlinear) operator $N: H\to H$  is  \emph{strongly monotone} if there exists a constant $\lambda>0$ such that
\begin{equation}
\label{eqn:monotoneOp}
(N(u) - N(v), u-v)_H \ge \lambda \|u-v\|_H^2;
\end{equation}
$N: H\to H$  is \emph{Lipschitz continuous} if there is a constant $\Lambda > 0$ such that
\begin{equation}
\label{eqn:lipschitzOp}
\|N(u) - N(v)\|_H \le \Lambda \|u - v\|_H.
\end{equation}
The following theorem ({cf. \cite{Necas.J1986}}) can be viewed as the
nonlinear version of the Lax-Milgram theorem. For completeness, we include a simple proof here. % \cite[Theorem 3.3.23]{Necas.J1986}.
\begin{theorem}
	\label{thm:fixed}
	Let the operator $N: H\to H$ be strongly monotone \eqref{eqn:monotoneOp} and Lipschitz continuous \eqref{eqn:lipschitzOp}. Then $N(u) =f $ has a unique solution for all $f\in H$.
\end{theorem}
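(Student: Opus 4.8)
The plan is to convert the operator equation $N(u) = f$ into a fixed-point problem and apply the Banach contraction mapping principle, which is available since $H$ is complete. The natural device is a damped residual iteration: for a parameter $\rho > 0$ to be chosen, define the map $T_\rho : H \to H$ by
\begin{equation*}
T_\rho(u) = u - \rho\bigl(N(u) - f\bigr).
\end{equation*}
A point $u \in H$ satisfies $T_\rho(u) = u$ if and only if $N(u) = f$, so solving the equation is equivalent to finding a fixed point of $T_\rho$. The whole argument then reduces to showing that for a suitable choice of $\rho$ the map $T_\rho$ is a strict contraction; existence and uniqueness of the fixed point, and hence of the solution, follow immediately.

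The central step is to estimate $\|T_\rho(u) - T_\rho(v)\|_H^2$ for arbitrary $u, v \in H$. Writing $w = u - v$ and expanding the square,
\begin{equation*}
\|T_\rho(u) - T_\rho(v)\|_H^2 = \|w\|_H^2 - 2\rho\,(N(u) - N(v), w)_H + \rho^2 \|N(u) - N(v)\|_H^2.
\end{equation*}
Here I would invoke the two structural hypotheses directly: strong monotonicity \eqref{eqn:monotoneOp} bounds the cross term from below by $\lambda\|w\|_H^2$, while Lipschitz continuity \eqref{eqn:lipschitzOp} bounds the last term from above by $\Lambda^2\|w\|_H^2$. Combining these yields
\begin{equation*}
\|T_\rho(u) - T_\rho(v)\|_H^2 \le \bigl(1 - 2\rho\lambda + \rho^2\Lambda^2\bigr)\|u - v\|_H^2.
\end{equation*}

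It remains to choose $\rho$ so that the quadratic factor $q(\rho) = 1 - 2\rho\lambda + \rho^2\Lambda^2$ is strictly less than one. Since $q(\rho) - 1 = \rho(\rho\Lambda^2 - 2\lambda)$, this holds precisely for $0 < \rho < 2\lambda/\Lambda^2$, an interval that is nonempty because $\lambda, \Lambda > 0$; the optimal choice $\rho = \lambda/\Lambda^2$ gives contraction factor $\sqrt{1 - \lambda^2/\Lambda^2}$. For any such $\rho$, $T_\rho$ is a contraction on the complete space $H$, so the Banach fixed-point theorem supplies a unique $u \in H$ with $T_\rho(u) = u$, equivalently the unique solution of $N(u) = f$. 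I do not anticipate a serious obstacle here: the computation is elementary and the hypotheses plug in cleanly. The only point requiring a moment's care is confirming that the admissible range for $\rho$ is genuinely nonempty and that the resulting factor is strictly below one; this is guaranteed by the strict positivity of both $\lambda$ and $\Lambda$, and one may note in passing that $\lambda \le \Lambda$ always holds by the Cauchy--Schwarz inequality applied to the two defining inequalities.
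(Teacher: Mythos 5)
Your proposal is correct and follows essentially the same route as the paper: the damped map $u \mapsto u - \rho\,(N(u)-f)$, the expansion of the squared norm, the bounds from strong monotonicity and Lipschitz continuity, and the Banach fixed-point theorem with $\rho \in (0, 2\lambda/\Lambda^2)$. In fact your contraction factor $1 - 2\rho\lambda + \rho^2\Lambda^2$ is stated more carefully than the paper's (which has a harmless typo, writing $\Lambda\varepsilon^2$ where $\Lambda^2\varepsilon^2$ is meant), and your closing remarks on the optimal $\rho$ and on $\lambda \le \Lambda$ are correct additions.
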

\begin{proof}
	Let $A u = u - \varepsilon (N(u) - f)$. It is clear that the solution to the equation $N(u) = f$ is equivalent to the fixed point $Au =u$ of $A$.  By the strong monotonicity \eqref{eqn:monotoneOp} and Lipschitz continuity \eqref{eqn:lipschitzOp}, the operator $A: H \to H$ satisfies
	\begin{align*}
	\|A u - Av\|_H^2 &=\| u- v\|_H^2 + \varepsilon^2 \| N(u) - N(v)\|_H^2 - 2\varepsilon (N(u) - N(v), u - v)_H\\
	&\le\left(1- 2\varepsilon \lambda + \Lambda \varepsilon^2\right) \|u - v\|_H^2.
	\end{align*}
	Clearly, for any $\varepsilon \in (0, 2\lambda/\Lambda^2)$, $A: H\to H$ is a contraction mapping. By Banach fixed-point theorem, $A$ has a unique fixed point. Hence $N(u) = f$ has a unique solution.
\end{proof}

\begin{remark}
	\label{rk:fixedpoint}
	By the proof of Theorem~\ref{thm:fixed}, we can construct a fixed point iteration $u_{n+1} = A u_n$ for any initial guess $u_0\in H$. For appropriate choice of $\varepsilon$, this iteration is guaranteed to converge (globally). This is in fact the relaxed Picard iteration, which is the algorithm used in Section~\ref{sec:num} for solving the nonlinear problems.
\end{remark}

Based on Theorem~\ref{thm:fixed}, in order to show \eqref{eqn:wg} has a unique solution, we just need to verify the related discrete nonlinear operator satisfies the strong monotonicity \eqref{eqn:monotoneOp} and the Lipschitz continuity \eqref{eqn:lipschitzOp}. These properties can be obtained by the Assumption \ref{A1:elliptic} on the coefficient $\kappa$.  We first show the following continuity and monotonicity lemma.
\begin{lemma}
	\label{lm:coef}
	If the coefficient $\kappa$ satisfies the Assumption \ref{A1:elliptic}, then we have
	\begin{align}
	&\alpha |\bs \xi - \bs \eta|^2 \le \left(\kappa( |\bs \xi|) \bs \xi - \kappa( |\bs \eta|) \bs \eta, \bs \xi - \bs \eta\right),  \qquad \forall \bs \xi, \bs  \eta \in \R^d \label{eqn:mono}\\
	&\left|\kappa( |\bs \xi|) \bs \xi - \kappa( |\bs \eta|) \bs \eta\right| \le \beta |\bs \xi - \bs \eta|, \qquad \forall \bs \xi, \bs  \eta \in \R^d.  \label{eqn:cont}
	\end{align}
\end{lemma}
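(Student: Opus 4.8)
The plan is to reduce both vector inequalities to the scalar hypothesis \eqref{eqn:A1:elliptic} by exploiting the rotational structure of the map $\bs\xi \mapsto \kappa(|\bs\xi|)\bs\xi$. I set $s = |\bs\xi|$, $t = |\bs\eta|$, and $p = (\bs\xi, \bs\eta)$; since both \eqref{eqn:mono} and \eqref{eqn:cont} are symmetric under interchanging $\bs\xi$ and $\bs\eta$, I may assume $0 \le s \le t$. By Cauchy--Schwarz, $p$ ranges over $[-st, st]$. I would first record the elementary consequence of Assumption~\ref{A1:elliptic} (taking $s=0$), already noted after the assumption, that $\alpha \le \kappa(r) \le \beta$ for every $r > 0$.

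For the monotonicity bound \eqref{eqn:mono}, I would expand
$$\left(\kappa(|\bs\xi|)\bs\xi - \kappa(|\bs\eta|)\bs\eta, \bs\xi - \bs\eta\right) = \kappa(s)s^2 + \kappa(t)t^2 - (\kappa(s) + \kappa(t))p,$$
together with the target lower bound $\alpha|\bs\xi - \bs\eta|^2 = \alpha(s^2 - 2p + t^2)$. Subtracting, the claim \eqref{eqn:mono} becomes
$$(\kappa(s) - \alpha)s^2 + (\kappa(t) - \alpha)t^2 - (\kappa(s) + \kappa(t) - 2\alpha)\,p \ge 0.$$
Here the coefficient of $p$ is nonpositive because $\kappa \ge \alpha$, so the left-hand side is decreasing in $p$ and it suffices to verify the inequality at the largest admissible value $p = st$. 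At $p = st$ the left-hand side factors as $(t-s)\big[(\kappa(t)t - \kappa(s)s) - \alpha(t-s)\big]$, which is nonnegative since $t \ge s$ and the scalar monotonicity in \eqref{eqn:A1:elliptic} gives $\kappa(t)t - \kappa(s)s \ge \alpha(t-s)$.

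For the continuity bound \eqref{eqn:cont}, I would proceed identically on the squared inequality. Expanding,
$$\left|\kappa(|\bs\xi|)\bs\xi - \kappa(|\bs\eta|)\bs\eta\right|^2 = \kappa(s)^2 s^2 - 2\kappa(s)\kappa(t)p + \kappa(t)^2 t^2,$$
so $\beta^2|\bs\xi - \bs\eta|^2 - |\kappa(|\bs\xi|)\bs\xi - \kappa(|\bs\eta|)\bs\eta|^2$ equals
$$(\beta^2 - \kappa(s)^2)s^2 + (\beta^2 - \kappa(t)^2)t^2 - 2(\beta^2 - \kappa(s)\kappa(t))\,p.$$
Since $\kappa \le \beta$, the coefficient $-2(\beta^2 - \kappa(s)\kappa(t))$ of $p$ is again nonpositive, so this expression is minimized over $p \in [-st, st]$ at $p = st$, where it collapses to the difference of squares $\beta^2(t-s)^2 - (\kappa(t)t - \kappa(s)s)^2 \ge 0$; the last inequality holds because $0 \le \kappa(t)t - \kappa(s)s \le \beta(t-s)$ by \eqref{eqn:A1:elliptic}.

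The one point requiring care --- and the step I expect to be the main obstacle --- is resisting the temptation to bound $p$ crudely at the outset: replacing $p$ by $st$ directly inside the inner product only controls $(|\bs\xi| - |\bs\eta|)^2$, which is strictly weaker than the desired $|\bs\xi - \bs\eta|^2$. The correct move is to keep $p$ as a free parameter, observe that in each case the sign condition on $\kappa$ (namely $\kappa \ge \alpha$, respectively $\kappa \le \beta$) makes the relevant expression monotone in $p$, and only then invoke Cauchy--Schwarz. Geometrically, both extremal cases occur when $\bs\xi$ and $\bs\eta$ are parallel, i.e. exactly the scalar configuration governed by Assumption~\ref{A1:elliptic}.
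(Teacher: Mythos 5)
Your proof is correct and takes essentially the same route as the paper: both expand everything in terms of $|\bs \xi|$, $|\bs \eta|$, $\bs \xi \cdot \bs \eta$, apply the scalar bound \eqref{eqn:A1:elliptic} to the radial part, and control the angular part $|\bs \xi||\bs \eta| - \bs \xi \cdot \bs \eta \ge 0$ via Cauchy--Schwarz together with $\alpha \le \kappa \le \beta$. Your ``linear in $p$, check the extremal parallel case $p = st$'' framing is a repackaging of the paper's two-term decomposition --- subtracting the target from the paper's identity gives exactly your factored expression $(t-s)\bigl[(\kappa(t)t - \kappa(s)s) - \alpha(t-s)\bigr] + (\kappa(s)+\kappa(t)-2\alpha)(st - p)$ --- so the two arguments coincide term by term.
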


\begin{proof}
	To prove \eqref{eqn:mono}, we use the lower bound in the Assumption~\ref{A1:elliptic}.
	\begin{align*}
	&\left(\kappa( |\bs \xi|) \bs \xi - \kappa( |\bs \eta|) \bs \eta, \bs \xi - \bs \eta\right) = \kappa( |\bs \xi|) |\bs \xi|^2 +  \kappa( |\bs \eta|) |\bs \eta|^2 - [\kappa( |\bs \xi|) + \kappa( |\bs \eta|)] \bs \xi \cdot \bs \eta\\
	&= \left(\kappa( |\bs \xi|) |\bs \xi| -  \kappa( |\bs \eta|) |\bs \eta|\right) (|\bs \xi| - |\bs \eta|)  + [\kappa( |\bs \xi| )+ \kappa( |\bs \eta|)] \left( |\bs \xi| |\bs \eta| - \bs \xi \cdot \bs \eta\right)\\
	&\ge \alpha (|\bs \xi| - |\bs \eta|)^2 + 2\alpha \left( |\bs \xi| |\bs \eta| - \bs \xi \cdot \bs \eta\right)\\
	&= \alpha | \bs \xi -\bs \eta|^2.
	\end{align*}
	
	On the other hand, by the upper bound in the Assumption~\ref{A1:elliptic}, we have,
	\begin{align*}
	&\left|\kappa( |\bs \xi|) \bs \xi - \kappa( |\bs \eta|) \bs \eta\right|^2  = \kappa( |\bs \xi|) |\bs \xi|^2 + \kappa( |\bs \eta|) |\bs \eta|^2  - 2\kappa( |\bs \xi|) \kappa( |\bs \eta|)\bs \xi \cdot\bs \eta \\
	&=\left(\kappa( |\bs \xi|) |\bs \xi| - \kappa( |\bs \eta|) |\bs \eta|\right)^2 + 2\kappa( |\bs \xi|) \kappa( |\bs \eta|) \left(|\bs \xi| |\bs \eta| -\bs \xi \cdot\bs \eta \right)\\
	&\le \beta^2 \left(|\bs \xi|  - |\bs \eta|\right)^2 + 2\beta^2 \left(|\bs \xi| |\bs \eta| -\bs \xi \cdot\bs \eta \right)\\
	&= \beta^2 |\bs \xi  -\bs \eta|^2.
	\end{align*}
	Taking square root on both sides, we  obtain  \eqref{eqn:cont}.
\end{proof}

The estimate \eqref{eqn:mono} in Lemma~\ref{lm:coef} implies the following strong monotonicity of $a_h$.
\begin{lemma}
	\label{lm:mon}
	If the coefficient $\kappa$ satisfies the Assumption \ref{A1:elliptic}, then the nonlinear form $a_h$ defined in \eqref{eqn:wg} is strongly monotone in the sense that
	\[
	\alpha\tbar u_1 - u_2\tbar^2  \le a_h(u_1; u_1, u_1-u_2) - a_h(u_2; u_2, u_1 - u_2), \quad \forall u_1, u_2\in V_h^0.
	\]
\end{lemma}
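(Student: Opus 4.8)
The plan is to reduce the claimed discrete strong monotonicity to the pointwise monotonicity \eqref{eqn:mono} of Lemma~\ref{lm:coef}, applied with $\bs\xi$ and $\bs\eta$ taken to be the values of the weak gradients of $u_1$ and $u_2$ at each point of each element. The essential analytic content has already been extracted into Lemma~\ref{lm:coef}, so the proof is mainly a matter of unwinding definitions.

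First I would rewrite the right-hand side using the definition of $a_h$ in \eqref{eqn:wg}. Since $a_h(u;u,v)=(\kappa(|\nabla_w u|)\nabla_w u,\nabla_w v)_{\cT_h}$, the difference becomes
\[
a_h(u_1;u_1,u_1-u_2)-a_h(u_2;u_2,u_1-u_2)=\left(\kappa(|\nabla_w u_1|)\nabla_w u_1-\kappa(|\nabla_w u_2|)\nabla_w u_2,\ \nabla_w(u_1-u_2)\right)_{\cT_h}.
\]
Next I would invoke the linearity of the weak gradient operator defined in \eqref{eqn:dwg}, which gives $\nabla_w(u_1-u_2)=\nabla_w u_1-\nabla_w u_2$ on each $T\in\cT_h$. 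This casts the right-hand side exactly into the form of the inner product appearing in \eqref{eqn:mono}, with the roles of $\bs\xi$ and $\bs\eta$ played pointwise by the vector fields $\nabla_w u_1(x)$ and $\nabla_w u_2(x)$.

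The core step is then to apply \eqref{eqn:mono} pointwise: for $\ale x\in T$ and each $T\in\cT_h$, setting $\bs\xi=\nabla_w u_1(x)$ and $\bs\eta=\nabla_w u_2(x)$ yields
\[
\alpha\,|\nabla_w u_1-\nabla_w u_2|^2\le\left(\kappa(|\nabla_w u_1|)\nabla_w u_1-\kappa(|\nabla_w u_2|)\nabla_w u_2\right)\cdot\left(\nabla_w u_1-\nabla_w u_2\right).
\]
Integrating this over each $T$ and summing over $\cT_h$, the left-hand side becomes $\alpha\,(\nabla_w(u_1-u_2),\nabla_w(u_1-u_2))_{\cT_h}=\alpha\,\tbar u_1-u_2\tbar^2$ by the definition \eqref{3barnorm} of the energy norm, while the right-hand side is precisely the reduced form of the difference of the $a_h$ terms computed above. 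This establishes the stated inequality.

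As for difficulty, there is no substantive obstacle here: the genuinely nontrivial algebra — establishing that $\kappa(|\cdot|)(\cdot)$ is monotone in the vectorial sense — is already handled in Lemma~\ref{lm:coef}. The only points demanding a moment of care are (i) confirming the linearity of $\nabla_w$ so that $\nabla_w(u_1-u_2)=\nabla_w u_1-\nabla_w u_2$, and (ii) noting that \eqref{eqn:mono} is valid for \emph{arbitrary} $\bs\xi,\bs\eta\in\R^d$ (with no ordering such as $|\bs\xi|\ge|\bs\eta|$ required), so that it may legitimately be applied to the pointwise values of the weak gradients, which are otherwise unconstrained.
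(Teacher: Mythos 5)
Your proof is correct and follows exactly the paper's route: the paper dismisses this lemma in one line as ``a direct consequence of the inequality \eqref{eqn:mono},'' and your write-up simply fills in that same reduction — linearity of $\nabla_w$, pointwise application of \eqref{eqn:mono} with $\bs\xi=\nabla_w u_1$, $\bs\eta=\nabla_w u_2$, then integration and summation over $\cT_h$. Nothing is missing; your two points of care (linearity of the weak gradient and the fact that \eqref{eqn:mono} holds for arbitrary vectors) are precisely the details the paper leaves implicit.
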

\begin{proof}
	The conclusion is a direct consequence of the inequality \eqref{eqn:mono}.
\end{proof}
On the other hand,  the estimate \eqref{eqn:cont} implies  the following Lipschitz continuity of $a_h$.
\begin{lemma}
	\label{lm:lipschitzWG}
	If the coefficient $\kappa$ satisfies the Assumption \ref{A1:elliptic}, then the nonlinear form $a_h$ defined in \eqref{eqn:wg} is Lipschitz continuous in the sense that
	\[|a_h(u_1; u_1, v) - a_h(u_2; u_2, v)| \le \beta \tbar u_1 -u_2\tbar \tbar v\tbar, \qquad \forall u_1, u_2, v\in V_h^0.\]
\end{lemma}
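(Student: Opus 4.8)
The plan is to reduce the bound to the pointwise continuity estimate \eqref{eqn:cont} from Lemma~\ref{lm:coef} via the Cauchy--Schwarz inequality. First I would unfold the definition of $a_h$ in \eqref{eqn:wg} and write the difference as a single inner product,
\[
a_h(u_1; u_1, v) - a_h(u_2; u_2, v) = \left(\kappa(|\nabla_w u_1|)\nabla_w u_1 - \kappa(|\nabla_w u_2|)\nabla_w u_2,\ \nabla_w v\right)_{\cT_h}.
\]
Applying the Cauchy--Schwarz inequality to the $L^2$-type inner product $(\cdot,\cdot)_{\cT_h}$ then gives
\[
\left|a_h(u_1; u_1, v) - a_h(u_2; u_2, v)\right| \le \left\|\kappa(|\nabla_w u_1|)\nabla_w u_1 - \kappa(|\nabla_w u_2|)\nabla_w u_2\right\|_{\cT_h}\,\|\nabla_w v\|_{\cT_h}.
\]

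Next I would bound the first factor using the elementwise pointwise estimate. On each $T\in\cT_h$, applying \eqref{eqn:cont} with $\bs\xi = \nabla_w u_1(x)$ and $\bs\eta = \nabla_w u_2(x)$ at $\ale x\in T$ yields $|\kappa(|\nabla_w u_1|)\nabla_w u_1 - \kappa(|\nabla_w u_2|)\nabla_w u_2| \le \beta\,|\nabla_w u_1 - \nabla_w u_2|$. Here I would invoke the linearity of the discrete weak gradient operator, which is immediate from its defining relation \eqref{eqn:dwg}, to identify $\nabla_w u_1 - \nabla_w u_2 = \nabla_w(u_1 - u_2)$. Squaring, integrating over each $T$, and summing over $\cT_h$ then gives $\left\|\kappa(|\nabla_w u_1|)\nabla_w u_1 - \kappa(|\nabla_w u_2|)\nabla_w u_2\right\|_{\cT_h} \le \beta\,\|\nabla_w(u_1 - u_2)\|_{\cT_h}$.

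Finally, recognizing the energy norm of \eqref{3barnorm}, namely $\|\nabla_w w\|_{\cT_h} = \tbar w\tbar$ for any $w$, I would conclude that the first factor is bounded by $\beta\,\tbar u_1 - u_2\tbar$ and that the second factor equals $\tbar v\tbar$. Combining these with the Cauchy--Schwarz bound above yields the claimed inequality.

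I do not anticipate a genuine obstacle here: the estimate is essentially the integrated version of the pointwise continuity \eqref{eqn:cont}, which has already absorbed all the nonlinear difficulty in Lemma~\ref{lm:coef}. The only points requiring a little care are the linearity of $\nabla_w$, so that the difference of the weak gradients is the weak gradient of the difference, and the observation that $(\cdot,\cdot)_{\cT_h}$ is a bona fide inner product, so that Cauchy--Schwarz applies both elementwise and in aggregate.
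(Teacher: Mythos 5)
Your proposal is correct and follows essentially the same route as the paper: both write the difference as the single inner product $\left(\kappa(|\nabla_w u_1|)\nabla_w u_1 - \kappa(|\nabla_w u_2|)\nabla_w u_2,\ \nabla_w v\right)_{\cT_h}$ and then bound it by $\beta \tbar u_1 - u_2\tbar\,\tbar v\tbar$ using the pointwise estimate \eqref{eqn:cont}. The only difference is that you spell out the intermediate steps (Cauchy--Schwarz, linearity of $\nabla_w$, and elementwise integration) that the paper compresses into one line.
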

\begin{proof}
	By inequality \eqref{eqn:cont}, we immediately get that
	\begin{align*}
	|a_h(u_1; u_1, v) - a_h(u_2; u_2, v)|  &= \left|\left(\kappa( |\nabla_{w} u_1|)\nabla_{w} u_1  - \kappa( |\nabla_{w} u_2|)\nabla_{w} u_2, \nabla_w v\right)_{\cT_{h}} \right| \\	
	&\le \beta \tbar u_1 - u_2 \tbar \tbar v\tbar.
	\end{align*}
	This completes the proof.
\end{proof}

Now, we are ready to present the existence and uniqueness of \eqref{eqn:wg}.
\begin{theorem}
	If the coefficient $\kappa$ satisfies the Assumption \ref{A1:elliptic}, then the weak Galerkin finite element scheme \eqref{eqn:wg} has a unique
	solution.
\end{theorem}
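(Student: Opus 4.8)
The plan is to recast the discrete scheme \eqref{eqn:wg} as a single nonlinear operator equation on the finite-dimensional Hilbert space $V_h^0$ and then invoke the abstract Theorem~\ref{thm:fixed}. First I would fix the Hilbert-space structure: equip $V_h^0$ with the inner product $(u,v)_H := (\nabla_w u, \nabla_w v)_{\cT_h}$, whose induced norm is precisely $\tbar\cdot\tbar$. Lemma~\ref{lm:equiv} guarantees that $\tbar\cdot\tbar$ is genuinely a norm on $V_h^0$ (being equivalent to $\|\cdot\|_{1,h}$), so that $(V_h^0,(\cdot,\cdot)_H)$ is a bona fide Hilbert space and the hypotheses of Theorem~\ref{thm:fixed} make sense.

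Next I would define the operator. For each fixed $u\in V_h^0$, the map $v\mapsto a_h(u;u,v)$ is a linear functional on $V_h^0$, so by the Riesz representation theorem there is a unique $N(u)\in V_h^0$ with $(N(u),v)_H = a_h(u;u,v)$ for all $v\in V_h^0$. Likewise the load functional $v\mapsto (f,v_0)$ is represented by a unique $F\in V_h^0$. With these identifications, solving \eqref{eqn:wg} is exactly solving the operator equation $N(u_h)=F$.

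Then the two structural lemmas translate directly into the hypotheses of Theorem~\ref{thm:fixed}. Using the identity $(N(u_1)-N(u_2),u_1-u_2)_H = a_h(u_1;u_1,u_1-u_2) - a_h(u_2;u_2,u_1-u_2)$ together with Lemma~\ref{lm:mon} yields the strong monotonicity \eqref{eqn:monotoneOp} with $\lambda=\alpha$. For Lipschitz continuity I would write $\|N(u_1)-N(u_2)\|_H = \sup_{v\neq 0}(N(u_1)-N(u_2),v)_H/\tbar v\tbar$ and bound the numerator via Lemma~\ref{lm:lipschitzWG}; dividing by $\tbar v\tbar$ and taking the supremum gives \eqref{eqn:lipschitzOp} with $\Lambda=\beta$. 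Since $0<\alpha<\beta$, Theorem~\ref{thm:fixed} applies and delivers a unique $u_h$ solving $N(u_h)=F$, hence a unique solution of \eqref{eqn:wg}.

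The argument is essentially bookkeeping once the two lemmas are in hand; the only point needing genuine care is the first step, namely choosing the inner product so that the monotonicity and continuity constants produced by Lemmas~\ref{lm:mon} and \ref{lm:lipschitzWG} match the abstract constants $\lambda$ and $\Lambda$ in exactly the energy norm $\tbar\cdot\tbar$. Using any other pairing (for instance a plain $L^2$ inner product of the interior components) would inject the mesh-dependent equivalence constants of Lemma~\ref{lm:equiv} into $\lambda$ and $\Lambda$, obscuring the clean values $\lambda=\alpha$ and $\Lambda=\beta$. Thus the main obstacle is really the correct Hilbert-space setup, after which existence and uniqueness follow immediately.
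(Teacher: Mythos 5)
Your proposal is correct and follows essentially the same route as the paper: equip $V_h^0$ with the energy inner product from \eqref{3barnorm}, use Riesz representation to define $N$ and the load vector, translate Lemmas~\ref{lm:mon} and \ref{lm:lipschitzWG} into strong monotonicity (with $\lambda=\alpha$) and Lipschitz continuity (with $\Lambda=\beta$), and invoke Theorem~\ref{thm:fixed}. The only cosmetic difference is that the paper verifies boundedness of the functionals explicitly (via $\|v_0\|\le C\tbar v\tbar$ and the bound $\Phi_w(v)\le\beta\tbar w\tbar\,\tbar v\tbar$), whereas you rely on finite-dimensionality, which is equally valid.
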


\begin{proof}
	In order to use the abstract existence and uniqueness result in Theorem \ref{thm:fixed}, we first need to rewrite the equation \eqref{eqn:wg} in the operator form on the finite dimensional Hilbert space $V_h^0$ with the inner product and norm defined in \eqref{3barnorm}. For any $v = \{v_0, v_b\} \in V_h^0$ we have (cf. \cite[Lemma 7.1]{MuWangYe2015}, and Lemma~\ref{lm:equiv} )
	\begin{equation}
	\|v_0\| \le C \| v\|_{1,h} \le C \tbar v\tbar .
	\end{equation}
	Hence, we have
	\[
	|(f,v_0) | \le \|f\| \|v_0\| \le C \|f\| \tbar v\tbar.
	\]
	This implies that $f$ is a bounded linear functional on $V_h^0$. Then by Riesz representation theorem, there exists an $\bs f\in V_h^0$ such that
	\[
	(f, v) = \langle\bs f, v\rangle, \qquad \forall v\in V_h^0.
	\]
	Here, $\langle \cdot, \cdot \rangle$ is the inner product defined in \eqref{3barnorm}.
	
	Fix a $w\in V_h^0$, consider the linear functional $\Phi_w(v) := a_h(w;w, v)$ for any $v\in V_h^0.$ By Assumption~\ref{A1:elliptic}, it is clear that
	\begin{align*}
	\Phi_w(v) &\le \beta \tbar w\tbar \tbar v\tbar.
	\end{align*}
	That is, $\Phi_w$ is a bounded linear functional on $V_h^0$. By Riesz representation theorem, there exists a $N(w) \in V_h^0$ such that
	\[
	\Phi_w (v) = \langle N(w), v \rangle, \qquad \forall v\in V_h^0.
	\]
	Therefore, \eqref{eqn:wg} is equivalent to the following operator form: Find $u_h\in V_h^0$ such that
	\begin{equation}
	\label{eqn:op}
	N(u_h) = \bs f.
	\end{equation}
	Lemma~\ref{lm:mon} implies that $N$ is strongly monotone, and Lemma~\ref{lm:lipschitzWG} implies that $N$ is Lipschitz continuous on $V_h^0$.  By Theorem~\ref{thm:fixed}, \eqref{eqn:op} has a unique solution. Therefore, \eqref{eqn:wg} has a unique solution.
\end{proof}

\section{Error Analysis}
\label{sec:error}

In this section, we establish the error estimate for the WG finite element approximation \eqref{eqn:wg} in the energy norm defined in \eqref{3barnorm}. For this purpose, we first introduce the following lemmas.
\begin{lemma}[{\cite[Lemma 2.1]{YeZhang2019a}}]
	\label{lm:commute}
	Let $v\in H^1(\Omega)$, then on any element $T\in \cT_h$, it holds
	\begin{equation}
	\label{eqn:commute}
	\nabla_w v = \Q_h \nabla v.
	\end{equation}
\end{lemma}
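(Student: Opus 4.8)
The plan is to unfold the definition \eqref{eqn:dwg} of the discrete weak gradient for an $H^1$ function and reduce the claimed identity to the defining property of the $L^2$ projection $\Q_h$. First I would clarify how $\nabla_w$ acts on $v\in H^1(\Omega)$: we identify $v$ with the weak function $\{v|_T,\,v|_{\partial T}\}$, that is, we take $v_0=v|_T$ and let $v_b=v|_{\partial T}$ be the (single-valued) trace. With this identification fixed, choose an arbitrary element $T\in\cT_h$ and an arbitrary test field $\bs\tau\in[\P_j(T)]^d$, and evaluate the right-hand side of \eqref{eqn:dwg}.

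The key step is an elementwise integration by parts. Since $v\in H^1(T)$ and $\bs\tau$ is a polynomial vector field, Green's formula gives $(\nabla v,\bs\tau)_T = -(v,\nabla\cdot\bs\tau)_T + \langle v,\bs\tau\cdot\bs n\rangle_{\partial T}$. Substituting $v_0=v$ and $v_b=v|_{\partial T}$ into \eqref{eqn:dwg}, the boundary contribution $\langle v_b,\bs\tau\cdot\bs n\rangle_{\partial T}$ matches exactly the boundary term produced by Green's formula, so the two boundary integrals cancel and I would obtain $(\nabla_w v,\bs\tau)_T = (\nabla v,\bs\tau)_T$ for every $\bs\tau\in[\P_j(T)]^d$.

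To finish, I would recall that $\Q_h$ is the $L^2$ projection onto $[\P_j(T)]^d$, so by its definition $(\Q_h\nabla v,\bs\tau)_T = (\nabla v,\bs\tau)_T$ for all $\bs\tau\in[\P_j(T)]^d$. Combining the two identities shows that $\nabla_w v$ and $\Q_h\nabla v$ have the same $L^2(T)$ inner product against every element of $[\P_j(T)]^d$; since both quantities already belong to $[\P_j(T)]^d$, they must coincide, which is precisely \eqref{eqn:commute}.

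The only point requiring care — the ``main obstacle'' such as it is — is the consistency of the two roles played by $v$ under the weak-function identification: the cancellation of boundary terms hinges on $v_0$ and $v_b$ arising from the \emph{same} $H^1$ function, so that $v_b$ equals the interior trace of $v_0$ on $\partial T$. For a genuinely discontinuous weak function this cancellation would fail, and it is exactly the single-valuedness of the trace of an $H^1$ function that makes $\nabla_w$ reproduce $\Q_h\nabla v$ elementwise.
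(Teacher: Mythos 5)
Your proof is correct and is essentially the argument the paper relies on: the paper gives no proof of its own for this lemma, citing Lemma 2.1 of the Ye--Zhang reference, and that proof proceeds exactly as yours does --- identify $v\in H^1(\Omega)$ with the weak function $\{v|_T, v|_{\partial T}\}$, apply Green's formula elementwise to get $(\nabla_w v,\bs\tau)_T=(\nabla v,\bs\tau)_T$ for all $\bs\tau\in[\P_j(T)]^d$, and conclude via the defining property of the $L^2$ projection $\Q_h$ that the two polynomial vector fields coincide. Your closing remark about the single-valuedness of the trace being what makes the boundary terms match is exactly the right point of care.
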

\begin{lemma}[{\cite[Lemma 4.3]{YeZhang2019a}}]
	\label{lm:wgh1}
	Let $w \in H^{k+1}(\Omega)$. Then
	\begin{equation}
	\label{eqn:wgh1}
	\tbar w- Q_h w\tbar \le C h^k |w|_{k+1}.
	\end{equation}
\end{lemma}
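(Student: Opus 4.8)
The plan is to reduce the estimate to an element-by-element bound on the weak gradient of the projection error and then invoke standard polynomial approximation theory. Viewing $w\in H^{k+1}(\Omega)\subset H^1(\Omega)$ as the weak function $\{w|_T,\,w|_{\partial T}\}$ and writing $e := w - Q_h w = \{w - Q_0 w,\ w - Q_b w\}$, the definition of the energy norm gives $\tbar e\tbar^2 = \sum_{T\in\cT_h}\|\nabla_w e\|_T^2$, so it suffices to prove $\|\nabla_w e\|_T \le C h_T^k |w|_{k+1,T}$ on each $T$. Starting from the defining relation \eqref{eqn:dwg} for $\nabla_w e$ and integrating the interior term by parts (legitimate since $w - Q_0 w \in H^1(T)$), I would derive, for every $\bs\tau\in[\P_j(T)]^d$, the identity
$$(\nabla_w e,\bs\tau)_T = (\nabla(w - Q_0 w),\bs\tau)_T + \l Q_0 w - Q_b w,\ \bs\tau\cdot\bs n\r_{\partial T}.$$
Here the interior residual $w - Q_0 w$ and the boundary mismatch $Q_0 w - Q_b w$ appear naturally, and both will be controlled by approximation estimates.

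Next I would take $\bs\tau = \nabla_w e$, which is admissible because $\nabla_w e \in [\P_j(T)]^d$, to obtain $\|\nabla_w e\|_T^2$ on the left. Applying Cauchy--Schwarz to both terms on the right, together with the inverse trace inequality $\|\bs\tau\cdot\bs n\|_{\partial T} \le C h_T^{-1/2}\|\bs\tau\|_T$ for polynomials, and then dividing by $\|\nabla_w e\|_T$, yields
$$\|\nabla_w e\|_T \le \|\nabla(w - Q_0 w)\|_T + C h_T^{-1/2}\|Q_0 w - Q_b w\|_{\partial T}.$$
The first term is bounded by $C h_T^k|w|_{k+1,T}$ directly from the approximation property of the $L^2$ projection $Q_0$ onto $\P_k(T)$ for $w\in H^{k+1}$.

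For the boundary term I would use the key observation that the trace of $Q_0 w$ on each edge $e\subset\partial T$ already lies in $\P_k(e)$, so $Q_b$ fixes it and hence $Q_0 w - Q_b w = Q_b(Q_0 w - w)$ on $e$. Since $Q_b$ is an $L^2(e)$ contraction, $\|Q_0 w - Q_b w\|_{\partial T} \le \|w - Q_0 w\|_{\partial T} \le C h_T^{k+1/2}|w|_{k+1,T}$ by the standard trace-approximation estimate, so that $h_T^{-1/2}\|Q_0 w - Q_b w\|_{\partial T} \le C h_T^k|w|_{k+1,T}$. Combining the two bounds gives $\|\nabla_w e\|_T \le C h_T^k|w|_{k+1,T}$; summing over $T\in\cT_h$ and taking the square root then produces \eqref{eqn:wgh1}.

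I expect the main obstacle to be the bookkeeping around the boundary mismatch $Q_0 w - Q_b w$: one must recognize that this quantity is not merely a generic edge-approximation error but can be rewritten as $Q_b$ applied to the interior projection residual, which is precisely what delivers the full $O(h_T^{k+1/2})$ order on $\partial T$ (and hence the sharp $O(h_T^k)$ after the $h_T^{-1/2}$ trace scaling). The commutativity identity of Lemma~\ref{lm:commute} offers an alternative route --- writing $\nabla_w e = \Q_h\nabla w - \nabla_w Q_h w$ --- but it leads to the same residual expression, so the integration-by-parts derivation above is the most self-contained.
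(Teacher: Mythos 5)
Your proof is correct. The paper itself gives no proof of this lemma---it is quoted directly from \cite[Lemma 4.3]{YeZhang2019a}---and your argument (integrate by parts in the defining relation \eqref{eqn:dwg}, test with $\bs\tau = \nabla_w e$, apply the polynomial inverse-trace inequality, and control the boundary mismatch $Q_0 w - Q_b w$ by writing it as $Q_b(Q_0 w - w)$ and using the $L^2$-contraction property of $Q_b$ together with the trace-approximation estimate for $Q_0$) is essentially the same argument used in that reference.
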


Now we are ready to prove the main theorem.
\begin{theorem}
	\label{thm:main}
	Let $u_h\in V_h$ be the weak Galerkin finite element solution to \eqref{eqn:wg}. Assume that $u\in H^{k+1}(\Omega)$ is the exact solution to \eqref{eqn:model}, and the coefficient $\kappa$ satisfies the Assumption \ref{A1:elliptic}. If in addition, $\kappa \in W^{k,\infty}(\Omega\times \R^+)$, then there exists a constant $C>0$ independent of $h$, $u$ and $u_h$ such that
	\[
	\tbar u - u_h\tbar \le C h^k \|u\|_{k+1}.
	\]
\end{theorem}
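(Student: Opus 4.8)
The plan is to split the error through the projection $Q_h u$, control the projection part by approximation theory, and control the remaining discrete part by the strong monotonicity of $a_h$ combined with a consistency estimate. First I would write $u-u_h=(u-Q_h u)+(Q_h u-u_h)$ and use the triangle inequality. The term $\tbar u-Q_h u\tbar$ is bounded immediately by $Ch^k|u|_{k+1}$ via Lemma~\ref{lm:wgh1}, so the real work is to estimate $e_h:=Q_h u-u_h\in V_h^0$. Applying Lemma~\ref{lm:mon} with $u_1=Q_h u$, $u_2=u_h$ and testing against $e_h$ itself gives
\[
\alpha\tbar e_h\tbar^2\le a_h(Q_h u;Q_h u,e_h)-a_h(u_h;u_h,e_h)=a_h(Q_h u;Q_h u,e_h)-(f,e_{h,0}),
\]
where the last equality uses the scheme \eqref{eqn:wg}. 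Hence everything reduces to bounding the consistency functional $\ell(u,v):=a_h(Q_h u;Q_h u,v)-(f,v_0)$ for $v\in V_h^0$.

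The key step is to rewrite $\ell(u,v)$. Setting $\bs\sigma:=\kappa(|\nabla u|)\nabla u$, so that $-\nabla\cdot\bs\sigma=f$, I would integrate by parts element-by-element in $(f,v_0)$ and use the continuity of the normal flux $\bs\sigma\cdot\bs n$ across interior edges (together with $v_b=0$ on $\partial\Omega$) to introduce $v_b$; comparing the result with the definition \eqref{eqn:dwg} of the weak gradient applied to $\Q_h\bs\sigma$, and invoking $\nabla_w Q_h u=\Q_h\nabla u$ from Lemma~\ref{lm:commute}, this produces the identity
\[
\ell(u,v)=\big(\kappa(|\Q_h\nabla u|)\Q_h\nabla u-\kappa(|\nabla u|)\nabla u,\ \nabla_w v\big)_{\cT_h}+\sum_{T\in\cT_h}\langle(\bs\sigma-\Q_h\bs\sigma)\cdot\bs n,\ v_0-v_b\rangle_{\partial T}.
\]
The first term is controlled by the Lipschitz bound \eqref{eqn:cont} of Lemma~\ref{lm:coef} (with $\bs\xi=\Q_h\nabla u$, $\bs\eta=\nabla u$) followed by the $L^2$-approximation estimate $\|\nabla u-\Q_h\nabla u\|\le Ch^k|u|_{k+1}$, yielding $\beta\|\nabla u-\Q_h\nabla u\|\,\tbar v\tbar\le Ch^k|u|_{k+1}\tbar v\tbar$. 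The second, boundary, term is the standard WG consistency residual: a trace inequality plus the approximation property of $\Q_h$ gives $\|\bs\sigma-\Q_h\bs\sigma\|_{\partial T}\le Ch_T^{k-1/2}|\bs\sigma|_{k,T}$, while the factor $h_T^{-1/2}\|v_0-v_b\|_{\partial T}$ is dominated by $\|v\|_{1,h}$; summing with Cauchy--Schwarz and the norm equivalence of Lemma~\ref{lm:equiv} yields $\le Ch^k|\bs\sigma|_k\tbar v\tbar$.

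Combining the two bounds gives $|\ell(u,v)|\le Ch^k\big(|u|_{k+1}+|\bs\sigma|_k\big)\tbar v\tbar$. Choosing $v=e_h$ produces $\alpha\tbar e_h\tbar^2\le Ch^k(\cdots)\tbar e_h\tbar$, hence $\tbar e_h\tbar\le Ch^k(\cdots)$, and the triangle inequality closes the estimate. The main obstacle I anticipate is the regularity bound $|\bs\sigma|_k=|\kappa(|\nabla u|)\nabla u|_k\le C\|u\|_{k+1}$, which is exactly where the hypothesis $\kappa\in W^{k,\infty}(\Omega\times\R^+)$ is needed: differentiating the composition $\kappa(|\nabla u|)\nabla u$ up to order $k$ produces, through the chain and product rules, terms mixing derivatives of $\kappa$ up to order $k$ with derivatives of $u$ up to order $k+1$, and controlling these products in $L^2$ requires $u\in H^{k+1}$ together with $L^\infty$ control of the lower-order derivatives of $\nabla u$. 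Thus the constant legitimately absorbs $\|\kappa\|_{W^{k,\infty}}$ (and a fixed power of the solution norm); verifying this composition estimate is the technical heart of the argument, whereas the monotonicity and consistency machinery is routine once Lemmas~\ref{lm:coef}--\ref{lm:wgh1} are in hand.
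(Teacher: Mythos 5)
Your overall strategy --- splitting through $Q_h u$, reducing to the consistency functional via strong monotonicity (Lemma~\ref{lm:mon}) and the scheme \eqref{eqn:wg}, elementwise integration by parts against $\Q_h\bs\sigma$, and a trace-inequality treatment of the boundary residual --- is exactly the paper's, and most steps are sound. However, there is one genuine error: your identity for $\ell(u,v)$ rests on the claim $\nabla_w Q_h u = \Q_h \nabla u$, which you attribute to Lemma~\ref{lm:commute}. That lemma asserts $\nabla_w v = \Q_h \nabla v$ only for $v \in H^1(\Omega)$, and it does not apply to the discrete weak function $Q_h u = \{Q_0 u, Q_b u\}$. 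The commutation $\nabla_w(Q_h u) = \Q_h(\nabla u)$ is a feature of the classical WG setting, where the weak gradient space is taken with degree $j \le k$: there $\nabla\cdot\bs\tau \in \P_{j-1}\subset\P_k$ and $\bs\tau\cdot\bs n|_e \in \P_j(e)\subset \P_k(e)$, so the projections $Q_0, Q_b$ drop out when testing. In the stabilizer-free method the weak gradient lives in $[\P_j(T)]^d$ with $j = n+k-1 > k$ (and even $j=k+1$ or $k+2$ in the numerics), so $\nabla\cdot\bs\tau$ and $\bs\tau\cdot\bs n$ have degree exceeding $k$, the projections do not drop, and in general $\nabla_w Q_h u \ne \Q_h\nabla u$. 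As written, the volume term of your identity is therefore wrong; it must read $\bigl(\kappa(|\nabla_w Q_h u|)\nabla_w Q_h u - \kappa(|\nabla u|)\nabla u,\ \nabla_w v\bigr)_{\cT_h}$.

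The gap is localized and repairable by exactly the device the paper uses. After applying the Lipschitz bound \eqref{eqn:cont} to the corrected volume term, you need $\|\nabla u - \nabla_w Q_h u\| \le C h^k |u|_{k+1}$, and this follows by inserting $\nabla_w u$ (not $\Q_h\nabla u$ directly):
\[
\|\nabla u - \nabla_w Q_h u\| \le \|\nabla u - \nabla_w u\| + \|\nabla_w (u - Q_h u)\| = \|\nabla u - \Q_h\nabla u\| + \tbar u - Q_h u\tbar \le C h^k |u|_{k+1},
\]
where the first term uses Lemma~\ref{lm:commute} legitimately (since $u \in H^1(\Omega)$) together with the approximation property \eqref{eqn:Qh}, and the second term is Lemma~\ref{lm:wgh1}. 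With this correction your argument coincides with the paper's proof; the remaining ingredients --- the monotonicity reduction, the boundary estimate via the trace inequality \eqref{eqn:trace} and \eqref{eqn:Qh} with the norm equivalence \eqref{eqn:equiv}, and the use of $\kappa \in W^{k,\infty}(\Omega\times\R^+)$ to control $|\bs\sigma(u)|_k$ (which you rightly identify as where that hypothesis enters) --- are all correct.
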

\begin{proof}
	By \eqref{eqn:wgh1} and the triangle inequality, it suffices to show that
	\begin{equation}
	\label{eqn:uh-Qu}
	\tbar u_h - Q_h u\tbar \le C h^k  |u|_{k+1}.
	\end{equation}
	For simplicity, let $e_h := u_h - Q_h u = \{e_0, e_b\}\in V_h^0$, where $e_0 = u_0 - Q_0 u$ and $e_b = u_b - Q_b u$. We also denote  $\bs \sigma(v):= \kappa( |\nabla v|) \nabla v$ and $\bs \sigma_w(v):= \kappa(|\nabla_w v|)\nabla_{w} v$.
	
	We test the continuous equation \eqref{eqn:model} with $v_{0}$ for any $v=\{v_{0}, v_{b}\}\in V_{h}^{0}$. Notice that the flux  $\bs \sigma(u)$ is continuous in the normal direction, we obtain
	\begin{equation}
	\label{eqn:continuous}
	(\bs \sigma(u), \nabla v_{0})_{\cT_{h}} - \langle \bs \sigma(u) \cdot \bs n, v_{0}-v_{b}\rangle_{\partial \cT_h} = (f, v_{0})_{\cT_{h}}.
	\end{equation}
	Then by the strong monotonicity Lemma~\ref{lm:mon}, we have
	\begin{align}
	\alpha\tbar e_h\tbar^2 &\le a_h(u_h; u_h, e_h) - a_h(Q_h u; Q_h u, e_h) \nonumber\\
	&=(f, e_0)_{\cT_{h}} - a_h(Q_h u; Q_h u, e_h) \nonumber\\
	&=(\bs \sigma(u), \nabla e_0)_{\cT_{h}} - \langle \bs \sigma(u) \cdot \bs n, e_0 - e_b\rangle_{\partial \cT_h} - (\bs \sigma_w(Q_h u), \nabla_w e_h)_{\cT_{h}}, \label{eqn:eh}
	\end{align}
	where we used \eqref{eqn:continuous} in the last step. 
	On each $T\in \cT_h$, it follows from integration by parts and the definition of the discrete weak gradient \eqref{eqn:dwg},
	\begin{align*}
	(\bs \sigma(u), \nabla e_0)_T &= (\Q_h (\bs \sigma(u)), \nabla e_0)_T\\
	&= - (\nabla \cdot \Q_h(\bs \sigma (u)), e_0)_T + \langle \Q_h(\bs \sigma(u))\cdot \bs n, e_0\rangle_{\partial T}\\
	& =(\Q_h (\bs \sigma(u)), \nabla_w e_h)_T + \langle \Q_h(\bs \sigma(u))\cdot \bs n, e_0-e_b\rangle_{\partial T}.
	\end{align*}
	Therefore, we get
	\begin{align}
	(\bs \sigma(u), \nabla e_0)_{\cT_h} =(\Q_h (\bs \sigma(u)), \nabla_w e_h)_{\cT_{h}} + \langle \Q_h(\bs \sigma(u))\cdot \bs n, e_0-e_b\rangle_{\partial \cT_h}. \label{eqn:sigmaue0}
	\end{align}
	Replacing the first term on the right-hand side of the inequality \eqref{eqn:eh} with the right-hand side of \eqref{eqn:sigmaue0},  we obtain
	\begin{align}
	\alpha\tbar e_h\tbar^2 &\le (\Q_h(\bs \sigma(u)) - \bs \sigma_w(Q_h u), \nabla_w e_h)_{\cT_{h}} \nonumber\\
	&-\langle \left(\Q_h(\bs \sigma(u)) - \bs \sigma(u)\right)\cdot \bs n, e_0-e_b\rangle_{\partial \cT_h}. \label{eqn:m1}
	\end{align}
	
	To estimate the first term in \eqref{eqn:m1}, we have
	\begin{align}
	\|\Q_h(\bs \sigma(u)) - \bs \sigma_w(Q_h u)\| &\le \|(I-\Q_h)\bs \sigma(u)\| + \|\bs \sigma(u) -\bs \sigma_w(Q_h u)\|  \nonumber\\
	&\le C h^k |\bs \sigma(u)|_{k} + \beta {\|\nabla u- \nabla_w (Q_h u)\|}	\nonumber\\
	&\le Ch^k |u|_{k+1} +  \beta \left( \|\nabla u- \nabla_w u\|+\|\nabla_w  (u-Q_h u)\| \right) \nonumber\\
	&=Ch^k |u|_{k+1} +  \beta \left( \|\nabla u- \Q_h\nabla u\|+\tbar u-Q_h u\tbar \right) \nonumber\\
	&\le Ch^k |u|_{k+1} \label{eqn:m2}
	\end{align}
	where in the second inequality we used the inequality \eqref{eqn:cont}, in the third inequality we used the condition $\kappa\in W^{k,\infty}(\Omega\times \R^+)$,  and in the fourth equality we used Lemma~\ref{lm:commute} for the second term and the definition of the norm \eqref{3barnorm} for the last term.
	Here, we have also used the approximation properties for $\Q_h$ and $Q_h$ in the last step. In particular, we have 
		\begin{equation}
		\label{eqn:Qh}
		\|\bs q - \Q_h \bs q\| \le Ch^k |\bs q|_{k}
		\end{equation}
	(see \cite[Lemma 4.1]{WangYe2014} for a proof of this inequality on general polytopal mesh); for the last term  $\tbar u-Q_h u\tbar$, the estimate follows directly from \eqref{eqn:wgh1}.
	Therefore, we obtain the following estimate for the first term in~\eqref{eqn:m1}
	\begin{align}
	(\Q_h(\bs \sigma(u)) - \bs \sigma_w(Q_h u), \nabla_w e_h)_{\cT_{h}} &\le \|\Q_h(\bs \sigma(u)) - \bs \sigma_w(Q_h u)\| \tbar e_h \tbar \nonumber\\
	&\le C h^k |u|_{k+1} \tbar e_h \tbar. \label{eqn:term1}
	\end{align}
	
	Now we turn to estimate the second term in \eqref{eqn:m1}. We need the following trace inequality for any function $v\in H^1(T)$ (cf. \cite{WangYe2014}):
	\begin{equation}
	\label{eqn:trace}
	\|v\|_{\partial T}^2 \le C(h_T^{-1} \|v\|_T^2 + h_T \|\nabla v\|_T^2).
	\end{equation}
	By this trace inequality and the estimate \eqref{eqn:m2}, we have the following estimate for the second term in \eqref{eqn:m1}:
		\begin{align}
		&\langle \left(\Q_h(\bs \sigma(u)) - \bs \sigma(u)\right)\cdot \bs n, e_0-e_b\rangle_{\partial \cT_h} \nonumber \\
		& \le C \sum_{T\in \cT_h} \|\Q_h(\bs \sigma(u)) - \bs \sigma( u)\|_{\partial T} \| e_0 - e_b\|_{\partial T} \nonumber\\
		&\le C \left( \sum_{T\in \cT_h} h_T\|\Q_h(\bs \sigma(u)) - \bs \sigma(u)\|^2_{\partial T}\right)^{1/2} \left( \sum_{T\in \cT_h}h_T^{-1}\| e_0 - e_b\|^2_{\partial T}\right)^{1/2} \nonumber\\
		&\le C h^k |\bs \sigma(u)|_k \tbar e_h\tbar 
		\le C h^k |u|_{k+1} \tbar e_h\tbar. \label{eqn:term2}
		\end{align}
	In the last inequality, we used the trace inequality \eqref{eqn:trace}, the estimate \eqref{eqn:Qh}, the condition $\kappa\in W^{k,\infty}(\Omega\times \R^+)$, and the norm equivalency \eqref{eqn:equiv}. The conclusion then follows directly from inequalities \eqref{eqn:term1} and \eqref{eqn:term2}. This completes the proof.
\end{proof}

%%%%%%%%%%%%%%%%%%%%%%%%%%%%%%%%%%%%%%%%%%%%%%%%%%%%%%%%%%%%%%%%%%%%%%%%%%%%%%
\section{Numerical Experiments}
\label{sec:num}
We apply the new stabilizer-free weak Galerkin finite element method with various polynomial degrees and  on various polygonal grids, to two monotone elliptic equations. Even though we did not give the analysis for the $L^2$ error estimates, we present them in these numerical examples for comparison. Note the $L^2$ error between the exact solution $u\in H_0^1(\Omega)$ and the WG approximation $u_h=\{u_0, u_b\} \in V_h^0$ is defined by 
\[
\|u - u_h\|_0^2:= \sum_{T\in \cT_h}  \int_T |u - u_0|^2 dx. 
\]

\subsection{Example 1.}  We solve problem \eqref{eqn:model} on square domain $\Omega=(0,1)^2$,  where the
coefficient function and the exact solution are
\begin{align} \label{s1} \kappa(|\nabla u|) = 1+e^{-|\nabla u|^2}, \quad
u=\sin(\pi x)(y-y^2).
\end{align}
This function $\kappa$ satisfies condition \eqref{eqn:A1:elliptic} with $\alpha=1-\sqrt{2/e}$ and
$\beta=2$.

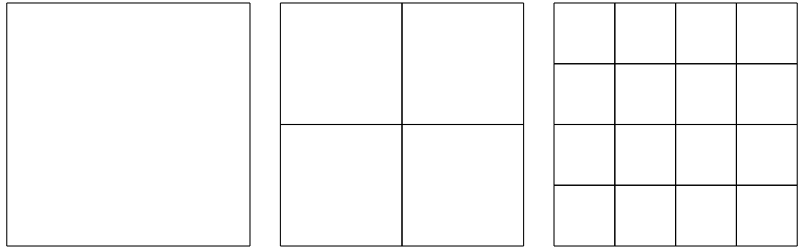
\begin{figure}[h!]
	\begin{center} \setlength\unitlength{1.15pt}
		\begin{picture}(260,80)(0,0)
		\def\tr{\begin{picture}(20,20)(0,0)\put(0,0){\line(1,0){20}}\put(0,20){\line(1,0){20}}
			\put(0,0){\line(0,1){20}} \put(20,0){\line(0,1){20}}
			\end{picture}}
		
		{\setlength\unitlength{4.6pt}
			\multiput(0,0)(20,0){1}{\multiput(0,0)(0,20){1}{\tr}}}
		
		{\setlength\unitlength{2.3pt}
			\multiput(45,0)(20,0){2}{\multiput(0,0)(0,20){2}{\tr}}}
		
		\multiput(180,0)(20,0){4}{\multiput(0,0)(0,20){4}{\tr}}
		
\end{picture}\end{center}
\caption{The level one, level two and level three rectangular grids. }\label{grid1}
\end{figure}

We compute the solution \eqref{s1} on two types of grids, shown in Fig. \ref{grid1}
and Fig. \ref{grid2}.
We use $\P_k$ ($k=1,2,3,4$ in \eqref{Vh}) weak Galerkin finite elements with $\P_{k+1}$
weak gradient ($j=k+1$) in \eqref{eqn:dwg}) on rectangular grids (Fig. \ref{grid1}),
and $\P_k$ ($k=1,2,3$ in \eqref{Vh}) weak Galerkin finite elements with  $\P_{k+2}$
weak gradient ($j=k+2$) in \eqref{eqn:dwg})
on  polygonal grids (Fig. \ref{grid2}).
In the computation,  the function $\kappa(|\nabla u_h|)$ is interpolated in to the
discontinuous $\P_{k-1}$ space on the same grid.
On each level,  we solve the nonlinear discrete equations by the relaxed-Picard iteration.
The errors and the order of convergence are listed in Tables \ref{t1}--\ref{t2} for the  computation on two types of grids,  respectively.

\begin{figure}[h!]
\begin{center} \setlength\unitlength{1.15pt}
	\begin{picture}(180,80)(0,0) 
	\def\gon{\begin{picture}(20,20)(0,0)\put(0,0){\line(1,0){20}}\put(0,20){\line(1,0){20}}
		\put(0,0){\line(0,1){20}} \put(20,0){\line(0,1){20}}
		\put(0,0){\line(3,2){7.2}}\put(20,0){\line(-3,2){7.2}}\put(7.2,4.8){\line(1,0){5.6}}
		\put(7.2,4.8){\line(-1,2){2.6}}\put(12.8,4.8){\line(1,2){2.6}}
		\put(0,20){\line(3,-2){7.2}}\put(20,20){\line(-3,-2){7.2}}\put(7.2,15.2){\line(1,0){5.6}}
		\put(7.2,15.2){\line(-1,-2){2.6}}\put(12.8,15.2){\line(1,-2){2.6}}
		\end{picture}} 
	{\setlength\unitlength{4.6pt}
		\multiput(0,0)(20,0){1}{\multiput(0,0)(0,20){1}{\gon}}}
	{\setlength\unitlength{2.3pt}
		\multiput(50,0)(20,0){2}{\multiput(0,0)(0,20){2}{\gon}}}

\end{picture}\end{center}
\caption{The level one and level two quadrilateral-pentagonal-hexagonal grids. }\label{grid2}
\end{figure}
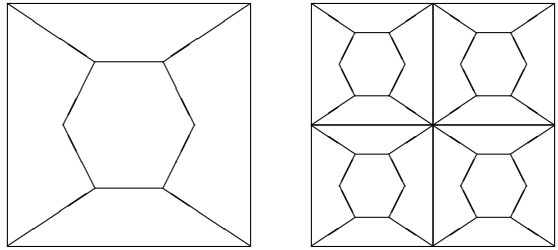

\begin{table}[!ht]
\centering \renewcommand{\arraystretch}{1.1}
\caption{The error and the order of convergence for \eqref{s1} by $\P_k$
WG finite elements \eqref{Vh} on square grids (Fig. \ref{grid1}) }
\label{t1}
\begin{tabular}{c|cc|cc}
\hline
level & $\|u_h-  u\|_0 $  &rate & $ \tbar u_h- u\tbar $ &rate    \\
\hline
&\multicolumn{4}{c}{$\P_1$ element with $\P_2$ gradient ($j=2$ in \eqref{eqn:dwg})}
\\ \hline
4&   0.6829E-03 & 2.58&   0.3218E-01 & 1.97 \\
5&   0.1343E-03 & 2.35&   0.8080E-02 & 1.99 \\
6&   0.3071E-04 & 2.13&   0.2022E-02 & 2.00 \\
7&   0.7483E-05 & 2.04&   0.5056E-03 & 2.00 \\
\hline
&\multicolumn{4}{c}{$\P_2$ element with $\P_3$ gradient ($j=3$ in \eqref{eqn:dwg})}
\\ \hline
4&   0.7518E-04 & 3.93&   0.5291E-02 & 2.97 \\
5&   0.5663E-05 & 3.73&   0.6690E-03 & 2.98 \\
6&   0.4687E-06 & 3.59&   0.8390E-04 & 3.00 \\
7&   0.4698E-07 & 3.32&   0.1049E-04 & 3.00 \\
\hline
&\multicolumn{4}{c}{$\P_3$ element with $\P_4$ gradient ($j=4$ in \eqref{eqn:dwg})}
\\ \hline
3&   0.3009E-03 & 6.82&   0.9279E-02 & 4.72 \\
4&   0.1304E-04 & 4.53&   0.5598E-03 & 4.05 \\
5&   0.7081E-06 & 4.20&   0.3455E-04 & 4.02 \\
6&   0.3753E-07 & 4.24&   0.2298E-05 & 3.91 \\
\hline
&\multicolumn{4}{c}{$\P_4$ element with $\P_5$ gradient ($j=5$ in \eqref{eqn:dwg})}
\\ \hline
2&   0.3124E-01 & 2.67&   0.2036E+00 & 3.04 \\
3&   0.1762E-03 & 7.47&   0.2652E-02 & 6.26 \\
4&   0.1811E-05 & 6.60&   0.5513E-04 & 5.59 \\
5&   0.6090E-07 & 4.89&   0.3395E-05 & 4.02 \\
\hline
\end{tabular}%
\end{table}%

\begin{table}[!ht]
\centering \renewcommand{\arraystretch}{1.1}
\caption{The error and the order of convergence for \eqref{s1} by $\P_k$
WG finite elements \eqref{Vh} on quadrilateral-pentagonal-hexagonal  grids (Fig. \ref{grid2}) }
\label{t2}
\begin{tabular}{c|cc|cc}
\hline
level & $\|u_h-  u\|_0 $  &rate & $ \tbar u_h- u\tbar $ &rate    \\
\hline
&\multicolumn{4}{c}{$\P_1$ element with $\P_3$ gradient ($j=3$ in \eqref{eqn:dwg})}
\\ \hline
3&   0.3785E-02 & 0.92&   0.1751E+00 & 0.94 \\
4&   0.1315E-02 & 1.53&   0.8855E-01 & 0.98 \\
5&   0.3692E-03 & 1.83&   0.4418E-01 & 1.00 \\
6&   0.9499E-04 & 1.96&   0.2206E-01 & 1.00 \\
\hline
&\multicolumn{4}{c}{$\P_2$ element with $\P_4$ gradient ($j=4$ in \eqref{eqn:dwg})}
\\ \hline
3&   0.1219E-02 & 1.81&   0.1617E-01 & 1.86 \\
4&   0.3427E-03 & 1.83&   0.4533E-02 & 1.84 \\
5&   0.8914E-04 & 1.94&   0.1187E-02 & 1.93 \\
6&   0.2254E-04 & 1.98&   0.3011E-03 & 1.98 \\
\hline
&\multicolumn{4}{c}{$\P_3$ element with $\P_5$ gradient ($j=5$ in \eqref{eqn:dwg})}
\\ \hline
2&   0.1024E-02 & 2.75&   0.1865E-01 & 1.99 \\
3&   0.1550E-03 & 2.72&   0.4905E-02 & 1.93 \\
4&   0.1422E-04 & 3.45&   0.6854E-03 & 2.84 \\
5&   0.1489E-05 & 3.26&   0.8820E-04 & 2.96 \\
\hline
\end{tabular}%
\end{table}%

For a comparison,  we also solve this problem by the traditional weak Galerkin finite element
method,  i.e., the method with a stabilization/penalty.  
The WG with a stabilization  is to find $u_{h} = \{u_{0}, u_{b}\} \in V_{h}^{0}$ such that:
\begin{equation}\label{eqn:wg1}
\left(\kappa(|\nabla_w u_h|) \nabla_w u_h,\nabla_w v \right)_{\cT_h}
+  \langle \frac 1h (u_0-u_b), v_0-v_b\rangle_{\partial \mathcal T_h}=(f,\; v_0) 
\end{equation} for all $v=\{v_0,v_b\}\in V_h^0$,  where
\begin{equation}\label{stab1}
\langle \frac 1h (u_0-u_b), v_0-v_b\rangle_{\partial T_h}=
\sum_{T\in\mathcal T_h} \int_{e\subset\partial T} \frac 1h (u_0-u_b)(v_0-v_b) ds.
\end{equation} 
The computation is performed on a PC with an Intel i5-7200U CPU at 2.50GHz.
We list the total CPU time for the two methods in Table \ref{t1-2}.
The differences are negligible.
But the new method is slightly more accurate than the traditional weak Galerkin
finite element.
This is because the traditional WG has a penalty term which makes the solution more continuous
(both new and the traditional solutions are discontinuous)
but less flexible.
Of course, the $P_2$ solutions are much more accurate than the $P_1$ solutions with less computational time.

\begin{table}[!ht]
\centering % \renewcommand{\arraystretch}{1.1}
\caption{A comparison of new WG \eqref{eqn:wg} and the traditional WG
\eqref{eqn:wg1}, for solving \eqref{s1} on square grids (Fig. \ref{grid1}) }
\label{t1-2}
\begin{tabular}{c|cc|cc}
\hline
level & $\|u_h-  u\|_0 $  &rate & $ \tbar u_h- u\tbar $ &rate    \\
\hline
&\multicolumn{4}{c}{New $\P_1$ WG by \eqref{eqn:wg}}
\\  
5&   0.1343E-03 & 2.35&   0.8080E-02 & 1.99 \\
6&   0.3071E-04 & 2.13&   0.2022E-02 & 2.00 \\
7&   0.7483E-05 & 2.04&   0.5056E-03 & 2.00 \\
&\multicolumn{4}{c}{ Total CPU time =   876.28125 seconds }\\
\hline
&\multicolumn{4}{c}{The traditional $\P_1$ WG by \eqref{eqn:wg1}}
\\
5&   0.1489E-03 & 2.28&   0.9474E-02 & 1.89 \\
6&   0.3520E-04 & 2.08&   0.2932E-02 & 1.69 \\
7&   0.8681E-05 & 2.02&   0.1132E-02 & 1.37 \\
&\multicolumn{4}{c}{ Total CPU time =    876.09375 seconds }\\
\hline
&\multicolumn{4}{c}{New $\P_2$ WG by \eqref{eqn:wg}}
\\  
4&   0.7447E-04 & 3.94&   0.5288E-02 & 2.97 \\
5&   0.5553E-05 & 3.75&   0.6681E-03 & 2.98 \\
6&   0.4609E-06 & 3.59&   0.8377E-04 & 3.00 \\ 
&\multicolumn{4}{c}{ Total CPU time =  371.625 seconds }\\
\hline
&\multicolumn{4}{c}{The traditional $\P_2$ WG by \eqref{eqn:wg1}}
\\
4&   0.7005E-04 & 4.00&   0.5564E-02 & 2.96 \\
5&   0.5040E-05 & 3.80&   0.7260E-03 & 2.94 \\
6&   0.4027E-06 & 3.65&   0.1024E-03 & 2.83 \\
&\multicolumn{4}{c}{ Total CPU time =  372.25 seconds }\\ \hline
\end{tabular}%
\end{table}%

\subsection{Example 2.}  We solve problem \eqref{eqn:model} on square domain $\Omega=(0,1)^2$ again,
where the
coefficient function and the exact solution are
\begin{align} \label{s2} \kappa(|\nabla u|) = \frac{3+2|\nabla u|}{1+|\nabla u|}, \quad
u=(x-x^2)\sin(\pi y).
\end{align}
This function $\kappa$ satisfies condition \eqref{eqn:A1:elliptic} with $\alpha=2$ and
$\beta=3$.
But this non-linear function has an additional singularity, comparing to that in Example 1,
the square-root singularity. 
This is related to the assumption $\kappa \in W^{k,\infty}(\Omega\times \R^+)$ in Theorem~\ref{thm:main}.  Here the singularity is caused by the function 
$|\nabla u|=\sqrt{ u_x^2+u_y^2}$, whose partial derivatives will blowup as $\nabla u \approx 0$ especially when the order $k>1$. 
Due to the loss of regularity of the coefficient $\kappa$, the higher order finite element methods do not perform as good as the  results in Example 1.
%In the computation,  the function $\kappa(|\nabla u|)$ is again interpolated in to the
%discontinuous $P_{k-1}$ space on the same grid.
%This is a typical method treating variable coefficient differential equations, i.e.,
%a typical quadrature rule for the numerical integration.
%If we interpolate the function $\kappa(|\nabla u|)$ in the space of continuous
%polynomials, the high order finite element may converge in steady high orders.
%This requires a future study.

\begin{table}[h!]
\centering \renewcommand{\arraystretch}{1.1}
\caption{The error and the order of convergence for \eqref{s2} by $\P_k$
WG finite elements \eqref{Vh} on square grids (Fig. \ref{grid1}) }
\label{t3}
\begin{tabular}{c|cc|cc}
\hline
level & $\|u_h-  u\|_0 $  &rate & $ \tbar u_h- u\tbar $ &rate    \\
\hline
&\multicolumn{4}{c}{$\P_1$ element with $\P_2$ gradient ($j=2$ in \eqref{eqn:dwg})}
\\ \hline
4&   0.6126E-03 & 3.42&   0.3214E-01 & 2.06 \\
5&   0.7175E-04 & 3.09&   0.7950E-02 & 2.02 \\
6&   0.1003E-04 & 2.84&   0.1982E-02 & 2.00 \\
7&   0.1721E-05 & 2.54&   0.4952E-03 & 2.00 \\
\hline
&\multicolumn{4}{c}{$\P_2$ element with $\P_3$ gradient ($j=3$ in \eqref{eqn:dwg})}
\\ \hline
4&   0.3328E-03 & 3.95&   0.6875E-02 & 3.00 \\
5&   0.2511E-04 & 3.73&   0.8669E-03 & 2.99 \\
6&   0.2382E-05 & 3.40&   0.1095E-03 & 2.98 \\
7&   0.2701E-06 & 3.14&   0.1382E-04 & 2.99 \\
\hline
&\multicolumn{4}{c}{$\P_3$ element with $\P_4$ gradient ($j=4$ in \eqref{eqn:dwg})}
\\ \hline
2&   0.7066E-01 & 0.97&   0.3533E+00 & 1.82 \\
3&   0.4185E-02 & 4.08&   0.3976E-01 & 3.15 \\
4&   0.2706E-03 & 3.95&   0.4931E-02 & 3.01 \\
5&   0.2536E-04 & 3.42&   0.6241E-03 & 2.98 \\
\hline
&\multicolumn{4}{c}{$\P_4$ element with $\P_5$ gradient ($j=5$ in \eqref{eqn:dwg})}
\\ \hline
2&   0.7508E-01 & 0.98&   0.3588E+00 & 1.95 \\
3&   0.7335E-02 & 3.36&   0.4763E-01 & 2.91 \\
4&   0.6863E-03 & 3.42&   0.6028E-02 & 2.98 \\
5&   0.7295E-04 & 3.23&   0.7777E-03 & 2.95 \\
\hline
\end{tabular}%
\end{table}%

\begin{table}[h!]
\centering \renewcommand{\arraystretch}{1.1}
\caption{The error and the order of convergence for \eqref{s2} by $\P_k$
WG finite elements \eqref{Vh} on quadrilateral-pentagonal-hexagonal  grids (Fig. \ref{grid2}) }
\label{t4}
\begin{tabular}{c|cc|cc}
\hline
level & $\|u_h-  u\|_0 $  &rate & $ \tbar u_h- u\tbar $ &rate    \\
\hline
&\multicolumn{4}{c}{$\P_1$ element with $\P_3$ gradient ($j=3$ in \eqref{eqn:dwg})}
\\ \hline
3&   0.5015E-02 & 2.09&   0.1725E+00 & 1.01 \\
4&   0.1308E-02 & 1.94&   0.8634E-01 & 1.00 \\
5&   0.3324E-03 & 1.98&   0.4319E-01 & 1.00 \\
6&   0.8356E-04 & 1.99&   0.2160E-01 & 1.00 \\
\hline
&\multicolumn{4}{c}{$\P_2$ element with $\P_4$ gradient ($j=4$ in \eqref{eqn:dwg})}
\\ \hline
3&   0.4899E-03 & 3.56&   0.1394E-01 & 2.29 \\
4&   0.7084E-04 & 2.79&   0.3275E-02 & 2.09 \\
5&   0.1548E-04 & 2.19&   0.8049E-03 & 2.02 \\
6&   0.3853E-05 & 2.01&   0.2001E-03 & 2.01 \\
\hline
&\multicolumn{4}{c}{$\P_3$ element with $\P_5$ gradient ($j=5$ in \eqref{eqn:dwg})}
\\ \hline
2&   0.4760E-02 & 2.79&   0.3883E-01 & 2.41 \\
3&   0.2852E-03 & 4.06&   0.4779E-02 & 3.02 \\
4&   0.1755E-04 & 4.02&   0.5969E-03 & 3.00 \\
5&   0.1114E-05 & 3.98&   0.7501E-04 & 2.99 \\
\hline
\end{tabular}%
\end{table}%

%%%%%%%%%%%%%%%%%%%%%%%%%%%%%%%%%%%%%%%%%%%%%%%%%%%%%%%%%%%%%%%%%%%%%%%%%%%%%%
\section{Conclusion}
\label{sec:conclusion}
% -- ----------------------------------------------------------------------------

In this paper, we studied the stabilizer-free weak Galerkin methods for a class of second order elliptic boundary value problems of divergence form and with gradient nonlinearity in the principal coefficient. With certain assumption on the nonlinear coefficient, we showed that the discrete problem has a unique solution. This was achieved by showing the associated operator satisfies certain continuity and monotonicity properties. With the help of these properties, we derived  optimal error estimates in the energy norm. We presented several numerical experiments to verify the error estimates.

From the numerical experiments in Section~\ref{sec:num}, we observed superconvergence in both $L^2$ and energy error estimates, especially on the rectangular grids (cf. Table~\ref{t1} and Table~\ref{t3}). These phenomena, as well as the effects of numerical quadrature on the approximation will be investigated in future works. 

\section*{Acknowledgments}
\label{sec:ack}
The first author was supported in part by National Science Foundation Grant DMS-1620016, and the third author was supported in part by NSF DMS-1319110.
% -- ----------------------------------------------------------------------------

%\bibliographystyle{abbrv}
%\bibliography{../../bib/nonlinear,../../bib/library,../../bib/WG,../../bib/books}

%\vspace*{0.5cm}

\end{document}